\documentclass[11pt,leqno]{amsproc}
\usepackage{graphicx}
\usepackage{amsmath,amssymb,amsthm,mathtools}
\usepackage{mathrsfs}
\usepackage{enumitem}
\usepackage{microtype}
\usepackage{hyperref}
\usepackage{url}
\usepackage{amssymb}

\newtheorem{theorem}{Theorem}[section]
\newtheorem{proposition}[theorem]{Proposition}
\newtheorem{lemma}[theorem]{Lemma}
\newtheorem{corollary}[theorem]{Corollary}
\newtheorem{rem}[theorem]{Remark}

\theoremstyle{definition}
\newtheorem{definition}[theorem]{Definition}

\newcommand{\R}{\mathbb R}
\newcommand{\Max}{\operatorname{Max}}
\newcommand{\Sc}{\Sigma}

\newcommand{\dr}{\downarrow \hspace{-3pt}}
\newcommand{\up}{\uparrow \hspace{-3pt}}

\newcommand{\II}{\begin{itemize}}
\newcommand{\I}{\item}
\newcommand{\III}{\end{itemize}}

\begin{document}

\title[The answers to two problems on dcpo models]
{The answers to two problems on dcpo models}

\author[C. Shen]{Shen Chong}
\address[C. Shen]{School of Science,\\ Beijing University of Posts and Telecommunications,\\ Beijing, PR China }
\author[X. Xi]{Xi Xiaoyong}
\address{School of Mathematics and Statistics\\
Jiangsu Normal University\\ Jiangsu China} \email{littlebrook@jsnu.edu.cn}
\author[D. Zhao]{Zhao Dongsheng}
\address[D. Zhao]{Mathematics and Mathematics Education, National Institute of
Education, Nanyang Technological University, 1 Nanyang Walk, Singapore 637616}
\email{dongsheng.zhao@nie.edu.sg}


\date{\today}

\begin{abstract}
A poset model of a topological space $X$ is a poset $P$ such that $X$ is homeomorphic to the maximal point space of $P$ (the set Max($P$) of all maximal points of $P$ equipped with the relative  Scott topology of $P$).
Xi and Zhao proved that if a space has a dcpo model satisfying Lawson condition, it must be coherent and well-filtered.
It is still open wether  every coherent and well filtered space $T_1$ has a dcpo model satisfying  Lawson condition. In this paper, we answer this problem. In another paper, Xi and Zhao proved that every Hausdorff  k-space has a bounded complete dcpo model. It is, however, still unknown whether it is true that if a Hausdorff space is a k-space if it has a bounded complete dcpo model. We will construct a Hausdorff space which is not a k-space but has a bounded complete dcpo model.
\end{abstract}

\subjclass[2000]{06B35, 06B30, 54A05}
\keywords{maximal point space; bounded complete dcpo; dcpos satisfying Lawson condition}
\maketitle

A poset model of a topological space $X$ is  a poset $P$ such that the subspace $\mbox{Max}(P)$ of all maximal points of $P$ of the  Scott space $\Sigma\!P$ is homeomorphic to $X$. It has been proved by several authors that a topological space has a poset model if and only if it is a $T_1$ space (see \cite{ali}\cite{erne-2011}\cite{zds-2009}).
In \cite{zhao-xi-2016}, it was further proved that every $T_1$ space has a directed complete poset (dcpo, for short) model. Finding poset models with extra properties can help us better understand the topologies of spaces modeled by these posets.
In \cite{zhao-xi-2017}, the topological spaces that have a bounded complete dcpo model are investigated. They proved that every CK-filter defined Hausdorff space (equivalently, k-spaces) has a bounded complete dcpo model. It is still open whether a Hausdorff space is a k-space if it has a bounded complete dcpo model. In \cite{xi-zhao-2017}, it was proved that if a space has a dcpo model satisfying Lawson condition, it must be coherent and well-filtered and it was left open whether the converse is also true: is  every coherent and well-filtered space has a bounded complete dcpo model.

In this paper we give answers to the above two problems. 

\section{Preliminaries}
	Let $(P,\leq)$ be a poset.  A nonempty subset $D\subseteq P$ is \emph{directed}
if for every $d_1,d_2\in D$ there is $d_3\in D$ with
$d_1\leq d_3$ and $d_2\leq d_3$.  A poset is a \emph{dcpo} if every directed
subset has a supremum.  It is \emph{bounded complete} if every nonempty
upper-bounded subset has a supremum.
For $A\subseteq P$, an element $u\in P$ is an upper bound of $A$ if
$a\leq u$ for all $a\in A$.  The supremum $\bigvee A$, when it exists, is
the least upper bound: it is an upper bound of $A$ and is below every other
upper bound.  A subset $U\subseteq P$ is an \emph{upper set} if
$x\in U$ and $x\leq y$ imply $y\in U$.

The \emph{Scott topology} $\Sc(P)$ on a poset $P$ consists of the upper sets
$U$ such that, whenever $D\subseteq P$ is directed and $\bigvee D\in U$,
then $D\cap U\neq\varnothing$.  The set of maximal elements of $P$ is denoted by
$\Max(P)$.  A \emph{dcpo model} of a topological space $X$ is a dcpo $P$ for
which $\Max(P)$, with the subspace topology inherited from $\Sc(P)$, is
homeomorphic to $X$.

\begin{definition} A poset $P$ is said to satisfy the Lawson condition if the restriction of the Scott topology and the Lawson topology on the set of maximal points of $P$ coincide.
\end{definition}

\begin{rem}
\II
\I[(1)] A poset $P$ satisfies Lawson condition iff for any $x\in P$, $(\up\,x)\cap \mbox{Max}(P)$ is a closed set in $\mbox{Max}(P)$, equivalently, $(\up\,x)\cap \mbox{Max}(P)=F\cap \mbox{Max}(P)$ for some Scott closed subset of $P$ \cite{xi-zhao-2017}
\I[(2)] Every bounded complete dcpo satisfies Lawson condition\cite{xi-zhao-2017}. This is equivalent to that every nonempty subset has a infimum if it  has a lower bound.
\I[(3)] For any dcpo $P$, $\dr(\mbox{Max}(P))=P$, that is, every element of $P$ is below some maximal point.
\III
\end{rem}

Throughout, $\omega=\{0,1,2,\ldots\}$, $\omega_1$ is the first uncountable
cardinal, and $\mathfrak c=|\R|=|2^\omega|$ denotes the cardinality of the
continuum.  For a set $M$ and a cardinal $\kappa$, we use the standard notation
\[
\begin{aligned}
	[M]^\kappa&=\{A\subseteq M:|A|=\kappa\},\\
	[M]^{\leq\kappa}&=\{A\subseteq M:|A|\leq\kappa\}.
\end{aligned}
\]
In particular,
\[
\begin{aligned}
	[M]^\omega&=\{A\subseteq M:A\text{ is countably infinite}\},\\
	[M]^{\leq\omega}&=\{A\subseteq M:A\text{ is finite or countably infinite}\},
\end{aligned}
\]
where the empty set and all finite sets belong to $[M]^{\leq\omega}$, and
\[
[M]^{<\omega}=\{A\subseteq M:A\text{ is finite}\}.
\]
Here ``countable'' means cardinality at most $\aleph_0$; thus a countable
set may be finite, while a member of $[M]^\omega$ is required to be
infinite. 

For more about  Scott topology and Lawson topology, we refer the reader to \cite{comp}\cite{Jean-2013}.

\section{A coherent and well-filtered $T_1$ space that does not have a dcpo model satisfying Lawson condition}

By Theorem 1 in  \cite{xi-zhao-2017}, if $X$ has a dcpo model satisfying Lawson condition, then $X$ must be coherent  and well-filtered. However, it is not known whether every coherent and well-filtered space $T_1$  has a dcpo model satisfying Lawson condition\cite{zhao-xi-2017}. In this section we give a coherent, well-filtered and compact $T_1$ space that does not have a dcpo model satisfying Lawson condition.

The following is a general result.

\begin{lemma}\label{space do not have a lawson model}
Let $X$ be a $T_1$ space such that there are two distinct points $a, b\in X$ such that (i) $a$ is not an isolated point; (ii) for any closed set $F$ with $b\not\in F$,  $F$ is finite. Then $X$ does not have a dcpo model satisfying Lawson condition.
\end{lemma}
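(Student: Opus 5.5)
The plan is to argue by contradiction. Suppose $P$ is a dcpo model of $X$ satisfying the Lawson condition, and identify $X$ with $\Max(P)$ carrying the relative Scott topology. Under this identification, $a,b\in\Max(P)$. For $y\in P$ write $F_y=(\up y)\cap\Max(P)$. By Remark (1), each $F_y$ is closed in $X$. By Remark (3), each $F_y$ is nonempty.

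The goal is to build a Scott closed subset $L\subseteq P$ with $L\cap\Max(P)=\Max(P)\setminus\{a\}$. Then $\{a\}$ is open in $X$, contradicting hypothesis (i). I would take
\[
L=\{y\in P:\ F_y\neq\{a\}\}=\{y\in P:\ \text{there is } m\in\Max(P),\ m\neq a,\ y\le m\}.
\]
Here $F_y\neq\{a\}$ is equivalent to $F_y$ containing a point other than $a$, since $F_y\neq\varnothing$. The set $L$ clearly contains every maximal point other than $a$, and it does not contain $a$ because $F_a=\{a\}$. It is a lower set, because $y'\le y$ implies $F_{y'}\supseteq F_y$.

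The main step is to show that $L$ is closed under directed suprema. Let $D\subseteq L$ be directed with $d=\bigvee D$. If $d\le b$, then $d\in L$ since $b\neq a$. Otherwise $d\notin\dr b$. Because $\dr b$ is Scott closed, some $d_0\in D$ satisfies $d_0\not\le b$. For every $e\in D$ with $e\ge d_0$, the set $F_e$ is closed in $X$ and does not contain $b$, so it is finite by hypothesis (ii). The family $\{F_e: e\in D,\ e\ge d_0\}$ consists of finite sets and is directed downward: if $e\le e'$ then $F_{e'}\subseteq F_e$. It therefore has a smallest member $F_{e_0}$, which is contained in every $F_e$ of the family.

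Since $e_0\in L$, the set $F_{e_0}$ contains some $m\neq a$. This $m$ lies in $F_e$ for all $e\ge d_0$ in $D$, so $m$ is an upper bound of a cofinal subset of $D$. Hence $m\ge d$, and so $d\in L$. Thus $L$ is Scott closed, and $\Max(P)\setminus L=\{a\}$ is open in $\Max(P)$, which is the desired contradiction.

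I expect the only delicate point to be this directed-supremum argument. It uses hypothesis (ii) twice: first to pass to the tail of $D$ above $d_0$, where finiteness makes the decreasing family of sets $F_e$ stabilize, and then to conclude $m\ge d$ from cofinality. Hypothesis (ii) is needed precisely because an infinite decreasing family of closed sets could have empty intersection.
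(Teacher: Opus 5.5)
Your proof is correct and is essentially the paper's argument: your $L$ is the complement of the paper's set $U=\{x\in P:(\uparrow x)\cap X=\{a\}\}$. Your proof that $L$ is Scott closed, via $d_0\not\le b$, finiteness of $F_{d_0}$ and stabilization of the decreasing finite family, is the dual of the paper's proof that $U$ is Scott open, where it picks $d_1\ge d_0$ in $D$ lying below none of the finitely many points of $F_{d_0}$ other than $a$. One small correction to your closing comment: hypothesis (ii) is used only once, for finiteness, and the step $m\ge d$ needs only directedness and cofinality.
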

\begin{proof}We prove by contrapositive. Assume that $X$ has a dcpo model satisfying Lawson condition. We can assume that $X=\mbox{Max}(P)$.

Let $U=\{\in P: (\up x)\cap X =\{x\}$. Since $a\in U$, $U\not=\emptyset$. Clearly, $U$ is an upper set in $P$.
Assume that $D\subseteq P$ is a directed set such that $\bigvee D\in U$. Then $\bigvee D\not\le b$. There is a $d_0\in D$ with $d_0\not\le b$. So the set $F=(\up d_0)\cap X$ is a closed set not containing $b$. By property (ii) of $X$, $F$ is a finite set. Assume that $F=\{a\}\cup\{x_1, x_2, \cdots, x_n\}$. Since $\bigvee D\not\le x_i$ for each $i$, there is a $d_1\in D$ such that $d_0\le d_1$ and $d_1\not\le x_i$ for all $i$ ($i=1, 2, \cdots n$). Then for all $d\ge d_1$, $(\up d)\cap X=\{a\}$, implying that $d_\in U$ for all $d\ge d_1$.
It follows that $U$ is a Scott open set of $P$. But then $U\cap X=\{a\}$, implying that $\{a\}$ is open in $X$, contradicting assumption (i).

 Hence $X$ does not have a dcpo model satisfying Lawson condition.
\end{proof}

Give a set $X$, the co-finite topology $\tau_{cf}$ on $X$ consists of all subsets $U$, either $U=\emptyset$ or $X-U$ is a finite set.

\begin{corollary}
Any infinite set equipped with the  co-finite topology
does not have a dcpo model  satisfying Lawson condition.
\end{corollary}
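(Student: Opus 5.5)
The plan is to apply Lemma~\ref{space do not have a lawson model} directly. Let $X$ be an infinite set with the co-finite topology $\tau_{cf}$. I will check that $X$ is $T_1$ and then exhibit two distinct points $a,b\in X$ satisfying conditions (i) and (ii) of the lemma. The lemma then says $X$ has no dcpo model satisfying the Lawson condition.

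\textbf{$T_1$.} Singletons are closed in $(X,\tau_{cf})$, since $X-\{x\}$ has finite complement $\{x\}$.

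\textbf{Choice of points.} Since $X$ is infinite, I can pick any two distinct points $a,b\in X$.

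\textbf{Condition (i).} Suppose $\{a\}$ were open. As $\{a\}\neq\emptyset$, its complement $X-\{a\}$ would have to be finite, so $X$ would be finite. This contradicts the assumption, so $a$ is not isolated.

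\textbf{Condition (ii).} Let $F$ be closed with $b\notin F$. Then $X-F$ is open and contains $b$, so it is nonempty. By the definition of $\tau_{cf}$, its complement $F$ is finite. In fact every proper closed subset of $X$ is finite, which is stronger than (ii) requires.

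There is no real obstacle here; the only point needing care is that infiniteness of $X$ is used exactly in (i). The substance of the argument is already contained in Lemma~\ref{space do not have a lawson model}.
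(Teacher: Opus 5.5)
Your proposal is correct and is the intended argument: the paper states this corollary directly after Lemma~\ref{space do not have a lawson model} without a separate proof. It is meant as the immediate application you give, namely checking $T_1$, that no point is isolated because $X$ is infinite, and that every proper closed set is finite.
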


We now give a $T_1$ space which is coherent and well-filtered and satisfies the properties (i) and (ii) in Lemma \ref{space do not have a lawson model}.

 Define a topology $\tau_0$ on the set $\mathbb{R}$ of all real numbers by taking as its closed sets
\begin{equation}
	\mathcal{C}_0 =\{\R\}\cup[\R]^{<\omega}\cup
	\{F\subseteq \R:0\in F,\ |F|\leq\aleph_0\}.
	\tag{1}
\end{equation}
The family in (1) is closed under arbitrary intersections and finite unions, so it is indeed the closed-set system of a topology. Write $\R_0=(\R,\tau_0)$.

\begin{lemma}[Compact subsets]\label{compact sets}
	For $K\subseteq \R$, $K$ is compact in $\R_0$ if and only if $0\in K$ or $K$ is finite.

In particular, $\R_0$ is a compact space.
\end{lemma}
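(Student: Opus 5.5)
We prove the two implications separately, working directly with the closed sets in (1). Equivalently, the open sets of $\R_0$ are $\emptyset$, the cofinite sets, and the sets $U$ with $0\notin U$ and $\R\setminus U$ countable.

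\emph{Sufficiency.} If $K$ is finite, it is trivially compact. Suppose $0\in K$ and let $\mathcal U$ be an open cover of $K$. Choose $U_0\in\mathcal U$ with $0\in U_0$. Then $U_0$ is nonempty, and since it contains $0$ it cannot be of the second kind. Hence $U_0$ is cofinite, so $K\setminus U_0$ is finite. Finitely many further members of $\mathcal U$ cover $K\setminus U_0$, which gives a finite subcover. Taking $K=\R$ shows that $\R_0$ is compact.

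\emph{Necessity.} Let $K$ be infinite with $0\notin K$; we produce an open cover of $K$ with no finite subcover. Choose a countably infinite set $A=\{a_0,a_1,\dots\}\subseteq K$ of distinct points. For each $n$ put
\[
U_n=\R\setminus\bigl(\{0\}\cup\{a_m:m\ge n\}\bigr)\cup\{a_n\}=\R\setminus\bigl(\{0\}\cup\{a_m:m>n\}\bigr).
\]
The complement of $U_n$ is a countable set containing $0$, so it belongs to $\mathcal C_0$, and therefore $U_n$ is open.

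The family $\{U_n\}$ covers $K$. Indeed, $0\notin K$; each $a_n$ lies in $U_n$; and every point of $K\setminus A$ lies in $U_0$, since it avoids $\{0\}\cup\{a_m:m>0\}$.

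No finite subfamily covers $K$, because the $U_n$ increase and $a_{N+1}\notin U_N$. Hence $K$ is not compact.

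The only delicate point is checking that each $U_n$ is open. This is why $0$ is placed in every complement: a countably infinite closed set must contain $0$. It is also why the hypothesis $0\notin K$ is needed, since it guarantees that removing $0$ from the open sets loses no points of $K$.
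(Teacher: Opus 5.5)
Your proof is correct and is essentially the paper's argument: an open set containing $0$ must be cofinite, and for an infinite $K$ with $0\notin K$ the increasing open sets $\R\setminus(\{0\}\cup\{a_m:m>n\})$ cover $K$ with no finite subcover. The paper uses $m\ge n$ instead of $m>n$, which is an immaterial index shift. One small slip: in the sufficiency step you mean that $U_0$ cannot be of the \emph{third} kind in your list (the open sets omitting $0$), not the second kind, which is the cofinite sets.
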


\begin{proof}
	If $0\in K$, every open cover has a member $U\ni 0$. Its complement is finite, because the only proper closed sets not containing $0$ are finite. Finitely many further members cover the remaining finite set. Finite sets are compact.
	
	Conversely, suppose $0\notin K$ and choose distinct $x_n\in K$. Put
	\[
	U_n=\R\setminus\bigl(\{0\}\cup\{x_m:m\geq n\}\bigr).
	\]
	These are open, cover $K$, and no finite subfamily covers $K$.
\end{proof}

\begin{proposition}\label{ancor countable is coherent}
	The space $\R_0$ is $T_1$, coherent, and well-filtered.
\end{proposition}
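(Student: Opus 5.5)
We check the three properties in turn, relying throughout on Lemma~\ref{compact sets}.

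\emph{$T_1$.} Every finite set, in particular every singleton, belongs to $\mathcal{C}_0$ and is therefore closed.

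\emph{Coherence.} We must show that the intersection of two compact saturated sets is compact. Since $\R_0$ is $T_1$, every subset is saturated, so it suffices to take compact $K_1,K_2$ and show $K_1\cap K_2$ is compact. By Lemma~\ref{compact sets}, either $0\in K_1$ and $0\in K_2$, in which case $0\in K_1\cap K_2$, or one of the two sets is finite, in which case $K_1\cap K_2$ is finite. In both cases Lemma~\ref{compact sets} gives that $K_1\cap K_2$ is compact.

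\emph{Well-filteredness.} This is the only step needing real work. Let $\{K_i\}_{i\in I}$ be a filtered family of compact (saturated) sets and $U$ an open set with $\bigcap_i K_i\subseteq U$. We must find $i$ with $K_i\subseteq U$. We split into two cases.

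\emph{Case 1: some $K_{i_0}$ is finite.} Replace the family by $\{K_i\cap K_{i_0}\}_i$; this is still filtered, consists of subsets of $K_{i_0}$, and has the same intersection. Suppose for contradiction that no $K_i\subseteq U$. Then for each $i$ the set $K_i\cap K_{i_0}\setminus U$ is a nonempty subset of the finite set $K_{i_0}\setminus U$. These sets form a filtered family of nonempty subsets of a finite set, so some member of the family is minimal, and by filteredness it is contained in every other member. Hence the family has nonempty intersection, so $\bigcap_i K_i\not\subseteq U$, a contradiction.

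\emph{Case 2: all $K_i$ are infinite.} By Lemma~\ref{compact sets}, $0\in K_i$ for every $i$, so $0\in\bigcap_i K_i\subseteq U$. If $U=\R$ we are done. Otherwise $\R\setminus U$ is a closed set not containing $0$, so by \eqref{1} it is finite, say $\R\setminus U=\{y_1,\dots,y_n\}$. Each $y_k$ lies outside $\bigcap_i K_i$, so there is $i_k$ with $y_k\notin K_{i_k}$. By filteredness choose $K_j\subseteq K_{i_1}\cap\dots\cap K_{i_n}$ (any $j$ works if $n=0$). Then $K_j$ avoids every $y_k$, so $K_j\subseteq U$.

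The only point needing care is the finite-intersection argument in Case~1. Everything else follows directly from the description of closed sets in \eqref{1} and the characterization of compact sets in Lemma~\ref{compact sets}.
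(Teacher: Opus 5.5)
Your proof is correct and follows essentially the same route as the paper. The $T_1$ and coherence arguments are identical. For well-filteredness you also use a finite-member case settled by stabilization of filtered families of finite sets, plus a case where $0\in U$ forces $\R\setminus U$ to be finite; there, your direct choice of $K_j\subseteq K_{i_1}\cap\dots\cap K_{i_n}$ is just a cleaner version of the paper's minimum-cardinality argument on $K_i\cap(\R\setminus U)$.

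One step in Case 1 should be written out. The claim $K_i\cap K_{i_0}\setminus U\neq\varnothing$ does not follow from $K_i\not\subseteq U$ alone. It follows by choosing, via filteredness, some $K_j\subseteq K_i\cap K_{i_0}$ and using $K_j\not\subseteq U$.
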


\begin{proof}
	Every singleton is finite, hence closed, so $\R_0$ is $T_1$. In a $T_1$ space every subset is saturated. By Lemma\ref{compact sets}, the intersection of two compact subsets either contains $0$, and is therefore compact, or is finite. Thus $\R_0$ is coherent.
	
	Let $\{K_i\}_{i\in I}$ be a filtered family of compact subsets and assume $\bigcap_i K_i\subseteq U$, where $U$ is open. If some $K_i$ omits $0$, it is finite. A filtered family containing a finite member has a member of minimum cardinality, and that member is contained in every member; it is therefore the total intersection and is contained in $U$.
	
	Otherwise every $K_i$ contains $0$, so $0\in U$. The complement $F=\R\setminus U$ is finite. If no $K_i\subseteq U$, then $\{K_i\cap F\}_{i\in I}$ is a filtered family of nonempty finite sets, which has nonempty intersection by the minimum-cardinality argument. This contradicts $F\cap\bigcap_iK_i=\varnothing$. Hence $\R_0$ is well-filtered.
\end{proof}

\begin{theorem}
	The anchored countable space $\R_0$ is $T_1$, compact, coherent, and well-filtered, but it has no bounded-complete dcpo model.
\end{theorem}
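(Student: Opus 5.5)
The first four properties are already established. Proposition \ref{ancor countable is coherent} gives that $\R_0$ is $T_1$, coherent and well-filtered, and Lemma \ref{compact sets} gives that $\R_0$ is compact. What remains is to show that $\R_0$ has no bounded complete dcpo model.

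By Remark (2), every bounded complete dcpo satisfies the Lawson condition. It therefore suffices to show that $\R_0$ has no dcpo model satisfying the Lawson condition, and for this I will apply Lemma \ref{space do not have a lawson model}. I need two distinct points $a,b$ meeting conditions (i) and (ii). I take $b=0$ and $a$ any nonzero real, say $a=1$.

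For condition (ii), let $F$ be a closed set with $0\notin F$. By the description (1) of $\mathcal C_0$, $F$ is not $\R$. It is also not a countable set containing $0$. Hence $F\in[\R]^{<\omega}$, so $F$ is finite.

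For condition (i), I must show $\{1\}$ is not open, that is, $\R\setminus\{1\}$ is not closed. This set omits $1$, so it is not $\R$. It is uncountable, since $|\R|=\mathfrak c>\aleph_0$. So it is neither finite nor a countable set containing $0$. Hence it is not in $\mathcal C_0$, and $1$ is not isolated.

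Lemma \ref{space do not have a lawson model} then shows that $\R_0$ has no dcpo model satisfying the Lawson condition, and hence no bounded complete dcpo model. There is no real obstacle here. The only point needing care is that the uncountability of $\R$ is what prevents $\R\setminus\{1\}$ from being a closed set.
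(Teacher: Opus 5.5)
Your proof is correct and is essentially the paper's argument. The paper also takes $b=0$ and $a=1$ in the lemma on spaces with no Lawson-condition dcpo model, cites the preceding proposition and the compactness lemma for the positive properties, and implicitly uses Remark (2) to pass from bounded complete dcpos to the Lawson condition; your explicit checks of (i), via the uncountability of $\R\setminus\{1\}$, and of (ii), via the description (1) of $\mathcal C_0$, just spell out what the paper states in one line.
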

\begin{proof}
Note that every open set containing the point $0$ has a finite complement and the point $1$ is not isolated. Also $\R_0$ is compact.
 Now the combination of Lemma\ref{space do not have a lawson model} and Proposition\ref{ancor countable is coherent} deduces this theorem.
\end{proof}

\section{A bounded complete dcpo model for the join of the Euclidean and cocountable topologies}\label{sec:join-euclidean-cocountable}

In this section, we construct a bounded complete dcpo model for the real line equipped with the
join of its usual topology and its cocountable topology. This answers the problem whether every Hausdorff space is a k-space whenever it has a bounded complete dcpo model.

Put
\[
\tau_{\mathrm e}=\text{the usual Euclidean topology on }\R,
\qquad
\tau_{\mathrm{cc}}=\{\varnothing\}\cup
\{\R\setminus A:A\in[\R]^{\leq\omega}\},
\]
and let $\tau_{\vee}=\tau_{\mathrm e}\vee\tau_{\mathrm{cc}}$.

The family
\[
\mathcal B_{\vee}
 =\{U\setminus A:U\in\tau_{\mathrm e},\ A\in[\R]^{\leq\omega}\}
\tag{J1}\label{eq:join-basis}
\]
is a basis for $\tau_{\vee}$.  In fact, every set in (J1) is the intersection
of a Euclidean open set and a cocountable open set, and finite intersections
remain of the same form since
\[
(U\setminus A)\cap(V\setminus B)=(U\cap V)\setminus(A\cup B).
\]

Now let
\[
\mathcal K_{\mathrm e}
 =\{K\subseteq\R:K\neq\varnothing\text{ and }K\text{ is compact in }\tau_{\mathrm e}\}.
\]
For $K\in\mathcal K_{\mathrm e}$ define
\[
\operatorname{Ext}(K)
 =\{A\in[\R]^{\leq\omega}: A\cap K=\varnothing\}.
\]
The second coordinate is a (not necessarily countable) family of admissible
countable labels.  Define
\[
P_{\vee}
 =\{(K,\Lambda):K\in\mathcal K_{\mathrm e},\ \Lambda\subseteq
\operatorname{Ext}(K)\},
\]
with order
\[
(K,\Lambda)\leq(L,\Gamma)
\quad\Longleftrightarrow\quad
L\subseteq K\ \text{ and }\ \Lambda\subseteq\Gamma.
\tag{J2}\label{eq:join-order}
\]
Thus the compact-set coordinate is ordered by reverse inclusion and the label
coordinate by inclusion.

\begin{rem}\label{lem:join-compact-fip}
Let $(K_i)_{i\in I}\subseteq\mathcal K_{\mathrm e}$ and suppose that every
finite subfamily has nonempty intersection.  Then
\[
K^*=\bigcap_{i\in I}K_i\in K_{\mathrm e}
\]
\end{rem}

\begin{theorem}[Bounded completeness]\label{thm:join-bounded-complete}
The poset $P_{\vee}$ is a bounded complete dcpo.  More precisely, whenever
$S\subseteq P_{\vee}$ is nonempty and has an upper bound, then
\[
\bigvee S
 =\left(\bigcap_{(K,\Lambda)\in S}K,
          \bigcup_{(K,\Lambda)\in S}\Lambda\right).
\tag{J3}\label{eq:join-sup}
\]
In particular, (J3) gives the supremum of every directed subset.
\end{theorem}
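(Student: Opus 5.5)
Let $S\subseteq P_{\vee}$ be nonempty with an upper bound $(L,\Gamma)$, and put $K^*=\bigcap_{(K,\Lambda)\in S}K$ and $\Lambda^*=\bigcup_{(K,\Lambda)\in S}\Lambda$. The plan is to show three things in order: that $(K^*,\Lambda^*)$ belongs to $P_{\vee}$, that it is an upper bound of $S$, and that it lies below every upper bound of $S$.

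\textbf{Step 1: $K^*\in\mathcal K_{\mathrm e}$.} Since $(L,\Gamma)$ is an upper bound, $L\subseteq K$ for every $(K,\Lambda)\in S$. Hence $\emptyset\neq L\subseteq K^*$, because members of $\mathcal K_{\mathrm e}$ are nonempty. Every finite subfamily of $\{K:(K,\Lambda)\in S\}$ therefore has nonempty intersection (it contains $L$), and Remark~\ref{lem:join-compact-fip} gives $K^*\in\mathcal K_{\mathrm e}$. This can also be checked directly: the intersection of a nonempty family of compact subsets of the Hausdorff space $(\R,\tau_{\mathrm e})$ is closed in any one member, hence compact, and it is nonempty since it contains $L$.

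\textbf{Step 2: $\Lambda^*\subseteq\operatorname{Ext}(K^*)$.} This is the step that needs care, because the labels in $\Lambda^*$ come from different $K$'s. I would use the following observation. If $A\in\Lambda$ for some $(K,\Lambda)\in S$, then $A\cap K=\varnothing$. Since $K^*\subseteq K$, it follows that $A\cap K^*=\varnothing$, so $A\in\operatorname{Ext}(K^*)$. Thus $\operatorname{Ext}$ is antitone in $K$, and shrinking the compact coordinate only enlarges the set of admissible labels. I notice that the upper bound is not actually needed for this step; it is needed only to guarantee that $K^*\neq\varnothing$. Steps 1 and 2 together give $(K^*,\Lambda^*)\in P_{\vee}$.

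\textbf{Step 3: supremum.} By construction $K^*\subseteq K$ and $\Lambda\subseteq\Lambda^*$ for every $(K,\Lambda)\in S$, so by (J2) the pair $(K^*,\Lambda^*)$ is an upper bound of $S$. Now let $(M,\Delta)$ be any upper bound of $S$. Then $M\subseteq K$ and $\Lambda\subseteq\Delta$ for all members of $S$. Hence $M\subseteq K^*$ and $\Lambda^*\subseteq\Delta$, that is, $(K^*,\Lambda^*)\leq(M,\Delta)$. So (J3) is the least upper bound, and $P_{\vee}$ is bounded complete.

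\textbf{Directed sets.} It remains to see that every directed subset $D$ has a supremum given by (J3). Here I expect the only real point to be that $D$ need not have an upper bound a priori, so $\bigcap K$ must be shown nonempty without one. Directedness gives the finite intersection property: for finitely many members of $D$ there is a common upper bound $(K_0,\Lambda_0)\in D$, and $K_0$ is contained in each of their compact coordinates, so their intersection is nonempty. Remark~\ref{lem:join-compact-fip} (or compactness of a single member together with closedness of the others) then gives $K^*\in\mathcal K_{\mathrm e}$ and nonempty. Steps 2 and 3 apply verbatim, since they did not use the upper bound. Hence (J3) is the supremum of $D$, and $P_{\vee}$ is a dcpo.
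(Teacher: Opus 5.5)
Your proof is correct and follows the paper's argument essentially step for step. You get $K^*$ nonempty and compact from the upper bound, the pooled labels are admissible because $K^*\subseteq K$, and leastness is checked via (J2). Your separate treatment of directed sets is a genuine improvement, however: you use directedness to obtain the finite intersection property without assuming an upper bound, whereas the paper's proof only establishes (J3) for bounded sets and then asserts the dcpo property without checking that directed subsets are bounded.
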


\begin{proof}
Let $(L,\Gamma)$ be an upper bound of $S$.  By (J2), $L\subseteq K$ for
every $(K,\Lambda)\in S$.  Thus the first coordinates of $S$  have the
finite-intersection property, and their intersection $K^*$ is nonempty and
compact by Remark~\ref{lem:join-compact-fip}.

Put
\[
\Lambda^*=\bigcup_{(K, \Lambda)\in S}\Lambda.
\]
If $A\in\Lambda^*$, then $A$ belongs to a label family $\Lambda$ with
$A\cap K=\varnothing$; since $K^*\subseteq K$, also
$A\cap K^*=\varnothing$.  Hence $\Lambda^*\subseteq \operatorname{Ext}(K^*)$ implying  $(K^*,\Lambda^*)\in P_{\vee}$.

Every member of $S$ is below $(K^*,\Lambda^*)$.  Conversely, if
$(M,\Delta)$ is an upper bound of $S$, then $M\subseteq K$ and
$\Lambda\subseteq\Delta$ for every member of $S$.  Therefore
$M\subseteq K^*$ and $\Lambda^*\subseteq\Delta$, which is exactly
$(K^*,\Lambda^*)\leq(M,\Delta)$.  This proves (J3).  The assertion for
directed sets gives the dcpo property, and the assertion for all nonempty
bounded sets gives bounded completeness.
\end{proof}

\subsection{Maximal elements and basic Scott-open sets}

For $x\in\R$ put
\[
m_x=(\{x\},\operatorname{Ext}(\{x\})).
\]

\begin{proposition}[Maximal elements]\label{prop:join-maximal}
\[
\operatorname{Max}(P_{\vee})=\{m_x:x\in\R\}.
\]
\end{proposition}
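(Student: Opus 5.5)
We prove two inclusions: each $m_x$ is maximal, and every maximal element is of the form $m_x$. Both follow directly from the order (J2) and the definition of $P_{\vee}$; the only fact used about compact sets is that members of $\mathcal K_{\mathrm e}$ are nonempty.

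\emph{Each $m_x$ is maximal.} First, $m_x\in P_{\vee}$, since $\{x\}$ is a nonempty Euclidean compact set and $\operatorname{Ext}(\{x\})\subseteq\operatorname{Ext}(\{x\})$. Suppose $m_x\leq(L,\Gamma)$. By (J2), $L\subseteq\{x\}$, and since $L\neq\varnothing$ we get $L=\{x\}$. Again by (J2), $\operatorname{Ext}(\{x\})\subseteq\Gamma$. On the other hand, membership of $(L,\Gamma)$ in $P_{\vee}$ forces $\Gamma\subseteq\operatorname{Ext}(L)=\operatorname{Ext}(\{x\})$. Hence $\Gamma=\operatorname{Ext}(\{x\})$ and $(L,\Gamma)=m_x$, so $m_x$ is maximal.

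\emph{Every maximal element is some $m_x$.} Let $(K,\Lambda)$ be maximal and pick $x\in K$, which is possible because $K\neq\varnothing$. We claim $(K,\Lambda)\leq m_x$. The first coordinate is fine, since $\{x\}\subseteq K$. For the second, take $A\in\Lambda$. Then $A\in\operatorname{Ext}(K)$, so $A$ is countable and $A\cap K=\varnothing$; in particular $x\notin A$, so $A\in\operatorname{Ext}(\{x\})$. Thus $\Lambda\subseteq\operatorname{Ext}(\{x\})$, which gives $(K,\Lambda)\leq m_x$ by (J2). Maximality then yields $(K,\Lambda)=m_x$.

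There is no real obstacle. The one point needing care is using nonemptiness of $K$ twice: once to rule out $L=\varnothing$ in the first part, and once to choose $x$ in the second. Note also that $m_x\neq m_y$ for $x\neq y$, because their first coordinates differ; this will matter later when identifying $\operatorname{Max}(P_{\vee})$ with $\R$.
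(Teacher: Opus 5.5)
Your proof is correct and follows the paper's argument: both first show $(K,\Lambda)\leq m_x$ for any $x\in K$, and then use (J2) and $L\neq\varnothing$ to check that nothing lies strictly above $m_x$. Concluding $(K,\Lambda)=m_x$ directly from maximality is slightly cleaner than the paper's extra step of passing through singleton first coordinates and adjoining missing labels.
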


\begin{proof}
Given $(K,\Lambda)\in P_{\vee}$ and $x\in K$, every member of $\Lambda$
avoids $K$ and hence avoids $x$.  Therefore
\[
(K,\Lambda)\leq(\{x\},\operatorname{Ext}(\{x\})).
\]
So a maximal element must have singleton first coordinate.  If its label
family is a proper subset of $\operatorname{Ext}(\{x\})$, adjoining a
missing admissible label gives a strictly larger element.  Thus every
maximal element is some $m_x$.

Conversely, if $m_x\leq(L,\Gamma)$, then (J2) gives
$\varnothing\neq L\subseteq\{x\}$, so $L=\{x\}$.  The defining condition
of $P_{\vee}$ gives $\Gamma\subseteq\operatorname{Ext}(\{x\})$, while
(J2) gives the reverse inclusion.  Thus $(L,\Gamma)=m_x$.
\end{proof}

\begin{lemma}[Cocountable generators]\label{lem:join-label-open}
For $A\in[\R]^{\leq\omega}$, let
\[
M_A=\{(K,\Lambda)\in P_{\vee}:A\in\Lambda\}.
\]
Then $M_A$ is Scott-open and
\[
M_A\cap\operatorname{Max}(P_{\vee})=\{m_x:x\notin A\}.
\]
\end{lemma}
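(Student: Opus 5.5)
The plan is to check three things in order: that $M_A$ is an upper set, that it is inaccessible by directed suprema, and what its trace on the maximal points is. Throughout I use the explicit description of the order (J2) and the formula (J3) for directed suprema from Theorem~\ref{thm:join-bounded-complete}.

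\emph{Upper set.} Suppose $(K,\Lambda)\in M_A$ and $(K,\Lambda)\leq(L,\Gamma)$. Then (J2) gives $\Lambda\subseteq\Gamma$, so $A\in\Gamma$ and $(L,\Gamma)\in M_A$. This step is immediate.

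\emph{Inaccessibility.} Let $D\subseteq P_{\vee}$ be directed with $\bigvee D\in M_A$. By (J3) the label coordinate of $\bigvee D$ is $\bigcup_{(K,\Lambda)\in D}\Lambda$. So $A$ lies in the label family $\Lambda$ of some member $(K,\Lambda)\in D$, and that member is in $M_A$. Thus $D\cap M_A\neq\varnothing$, and $M_A$ is Scott open. The only real content is that directed joins are computed coordinatewise, with union in the second coordinate, and Theorem~\ref{thm:join-bounded-complete} supplies exactly this. I therefore do not expect a genuine obstacle here; it is the step most dependent on earlier results.

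\emph{Trace on maximal points.} By Proposition~\ref{prop:join-maximal} every maximal point has the form $m_x=(\{x\},\operatorname{Ext}(\{x\}))$. For a fixed $x$, we have $m_x\in M_A$ if and only if $A\in\operatorname{Ext}(\{x\})$. Since $A$ is countable by hypothesis, by the definition of $\operatorname{Ext}$ this holds if and only if $A\cap\{x\}=\varnothing$, that is, $x\notin A$. Hence
\[
M_A\cap\operatorname{Max}(P_{\vee})=\{m_x:x\notin A\}.
\]
The one point to watch is the degenerate case $A=\varnothing$, which is allowed in $[\R]^{\leq\omega}$. It causes no problem: $\varnothing\in\operatorname{Ext}(\{x\})$ for every $x$, so $M_{\varnothing}$ contains all maximal points, consistent with the formula.
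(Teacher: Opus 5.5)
Your proposal is correct and follows the paper's proof essentially step for step. Upper-ness comes from the inclusion of label coordinates in (J2). Scott-inaccessibility comes from (J3), since the label coordinate of a directed supremum is the union of the label coordinates. The trace comes from $A\in\operatorname{Ext}(\{x\})$ iff $x\notin A$. Your check of the case $A=\varnothing$ is a harmless extra that the paper omits.
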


\begin{proof}
The set $M_A$ is an upper set.  If the supremum of a directed family lies in
$M_A$, (J3) says that $A$ belongs to the union of its label coordinates,
and hence to one member of the family.  Thus $M_A$ is Scott-open.  Finally,
$A\in\operatorname{Ext}(\{x\})$ if and only if $x\notin A$.
\end{proof}

\begin{lemma}[Euclidean generators]\label{lem:join-euclidean-open}
For $U\in\tau_{\mathrm e}$, let
\[
G_U=\{(K, \Lambda)\in P_{\vee}:K\subseteq U\}.
\]
Then $G_U$ is Scott-open and
\[
G_U\cap\operatorname{Max}(P_{\vee})=\{m_x:x\in U\}.
\]
\end{lemma}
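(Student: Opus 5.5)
We check three things in turn: that $G_U$ is an upper set, that it is inaccessible by directed suprema, and that its trace on $\operatorname{Max}(P_{\vee})$ is as claimed.

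\emph{Upper set.} If $(K,\Lambda)\in G_U$ and $(K,\Lambda)\leq(L,\Gamma)$, then (J2) gives $L\subseteq K\subseteq U$. Hence $(L,\Gamma)\in G_U$.

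\emph{Inaccessibility.} Let $D\subseteq P_{\vee}$ be directed with $\bigvee D\in G_U$. By Theorem~\ref{thm:join-bounded-complete}, formula (J3) identifies the first coordinate of $\bigvee D$ as $K^*=\bigcap_{(K,\Lambda)\in D}K$, so $K^*\subseteq U$.

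The key step is a compactness argument. The family of first coordinates $\{K:(K,\Lambda)\in D\}$ is filtered under inclusion, because $D$ is directed and the first coordinate is ordered by reverse inclusion. Fix one member $(K_0,\Lambda_0)\in D$.
\begin{itemize}
\item Each set $K_0\setminus U$ and $K\setminus U=K\cap(\R\setminus U)$ is Euclidean-closed inside the compact set $K_0$.
\item Their intersection over all $(K,\Lambda)\in D$ is $K^*\setminus U=\varnothing$.
\item By compactness of $K_0$, finitely many of the sets $K_1\setminus U,\dots,K_n\setminus U$, together with $K_0\setminus U$, already have empty intersection.
\item Choose $(K',\Lambda')\in D$ above $(K_0,\Lambda_0),(K_1,\Lambda_1),\dots,(K_n,\Lambda_n)$. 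Then $K'\subseteq K_0\cap K_1\cap\dots\cap K_n$, so $K'\setminus U=\varnothing$, that is, $K'\subseteq U$.
\end{itemize}
Thus $(K',\Lambda')\in D\cap G_U$. This is the only nontrivial step, and it is just the standard fact that in a Hausdorff space a filtered intersection of compact sets lying inside an open set forces some member of the family to lie inside it.

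\emph{Trace on the maximal points.} By Proposition~\ref{prop:join-maximal}, the maximal elements are exactly the $m_x$. For each $x$, we have $m_x\in G_U$ if and only if $\{x\}\subseteq U$, that is, if and only if $x\in U$. This gives the stated description of $G_U\cap\operatorname{Max}(P_{\vee})$.
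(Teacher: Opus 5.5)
Your proof is correct and is essentially the paper's argument. The paper runs the compactness step by contradiction, showing the closed sets $K_{i_0}\cap K_i\cap(\R\setminus U)$ have the finite-intersection property in $K_{i_0}$, while you run it directly through a finite subcover. One small wording point: the sets $K\setminus U$ need not lie inside $K_0$, so you should work with their traces $K_0\cap(K\setminus U)$, which is harmless since $K_0\setminus U$ is already in your family.
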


\begin{proof}
The set $G_U$ is an upper set.  Let $D=\{(K_i,\Lambda_i):i\in I\}$ be
directed and suppose that its supremum $(K^*,\Lambda^*)$ belongs to $G_U$.
If no $K_i$ were contained in $U$, put $F=\R\setminus U$.  Then every
$K_i\cap F$ is nonempty.  Fix $i_0$.  For every finite collection of
indices, directedness provides $j$ with $K_j$ contained in all the
corresponding compact sets; because $K_j\not\subseteq U$, it meets $F$.
Thus the closed sets $K_{i_0}\cap K_i\cap F$ have the finite-intersection
property in $K_{i_0}$.  Compactness gives $K^*\cap F\neq\varnothing$,
contradicting $K^*\subseteq U$.  Some $K_i$ is therefore contained in $U$, 
hence $(K_i, \Lambda_i)\in G_U$. So $G_U$ is Scott open.  That $G_U\cap\operatorname{Max}(P_{\vee})=\{m_x:x\in U\}$ is trivial.
\end{proof}

\subsection{The maximal Scott topology}

\begin{theorem}[Join-topology trace theorem]\label{thm:join-trace}
Under the bijection $x\mapsto m_x$, the subspace Scott topology on
$\operatorname{Max}(P_{\vee})$ is precisely
$\tau_{\mathrm e}\vee\tau_{\mathrm{cc}}$.
\end{theorem}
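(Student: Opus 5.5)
We have to show two inclusions under the identification $x\leftrightarrow m_x$ supplied by Proposition~\ref{prop:join-maximal}.

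\emph{Join topology is contained in the trace topology.} This direction is immediate from the two generator lemmas. By (J1), every $\tau_\vee$-open set is a union of sets $U\setminus A$ with $U\in\tau_{\mathrm e}$ and $A\in[\R]^{\leq\omega}$. For such a basic set, $G_U\cap M_A$ is Scott open by Lemmas~\ref{lem:join-euclidean-open} and~\ref{lem:join-label-open}. Its trace on $\operatorname{Max}(P_\vee)$ is $\{m_x:x\in U,\ x\notin A\}$. Taking unions then gives the inclusion.

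\emph{Trace topology is contained in the join topology.} Let $W\subseteq P_\vee$ be Scott open and let $m_x\in W$. We must find $U\in\tau_{\mathrm e}$ and a countable $A$ such that $x\in U\setminus A$ and $m_y\in W$ for every $y\in U\setminus A$.

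The plan is to approximate $m_x$ by a directed family. Let $\mathcal N$ be the set of compact Euclidean neighbourhoods $[x-\varepsilon,x+\varepsilon]$, and let $\mathcal F$ be the set of finite subsets of $\operatorname{Ext}(\{x\})$. For $N\in\mathcal N$ and $\Phi\in\mathcal F$, consider pairs of the form $(N,\Lambda)$ whose labels avoid $N$. There is a problem here: a label $A\in\operatorname{Ext}(\{x\})$ may accumulate at $x$, so it need not avoid any neighbourhood $N$. I therefore change the shape of the first coordinate. Use $N\setminus V$, where $V$ is a Euclidean open set containing $\bigcup\Phi$ but not $x$. Such a $V$ need not exist, since $\bigcup\Phi$ may accumulate at $x$.

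Instead I would use compact sets of the form $K=\{x\}\cup(N\setminus V)$ with $V$ open, $x\notin V$, and $K$ compact. Alternatively, and more simply, take the directed family
\[
D=\{(K,\Phi)\},
\]
where $K$ ranges over Euclidean-compact sets with $x\in K$ and $K\cap\bigcup\Phi=\varnothing$, ordered as in (J2). I need to check three things. First, $D$ is directed: given $(K_1,\Phi_1)$ and $(K_2,\Phi_2)$, the pair $(K_1\cap K_2,\Phi_1\cup\Phi_2)$ lies in $D$. Second, $\bigvee D=m_x$ by (J3): the intersection of all such $K$ is $\{x\}$, because $\{x\}$ itself is such a $K$, and the union of all $\Phi$ is $\operatorname{Ext}(\{x\})$. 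Scott openness then yields some $(K,\Phi)\in W$. Third, because $W$ is an upper set, every $y\in K$ with $\Phi\subseteq\operatorname{Ext}(\{y\})$ satisfies $m_y\ge(K,\Phi)$, as in the proof of Proposition~\ref{prop:join-maximal}, and hence $m_y\in W$. So every $y\in K\setminus\bigcup\Phi$ lies in the trace of $W$.

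\textbf{Main obstacle.} As stated, $K=\{x\}$ is allowed in $D$, so this argument only returns the trivial neighbourhood $\{x\}$. To get a genuine neighbourhood, the directed family must consist of elements whose first coordinates are Euclidean neighbourhoods of $x$ with a countable set removed. I would therefore restrict $K$ to sets of the form $[x-\varepsilon,x+\varepsilon]\setminus O$, where $O$ is a Euclidean open set and $x\notin\overline{O}$. I would then replace the labels $\Phi$ by finite families of countable sets $A$, taking only the portion of each $A$ lying outside $[x-\varepsilon,x+\varepsilon]$, together with sets disjoint from $K$. The task is to show that the supremum is still $m_x$. This requires $\bigcup\Lambda$ to recover every $A\in\operatorname{Ext}(\{x\})$, and that is where labels accumulating at $x$ cause trouble.

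My first attempt at this step would be to check whether the paper's order permits labels that do not avoid $K$. They do not. If a countable label $A$ accumulating at $x$ can never be added below $m_x$ except with $K\subseteq\R\setminus A$, then $K$ omits a countable set accumulating at $x$. Such a $K$ is still compact (for example $[x-\varepsilon,x+\varepsilon]\setminus\bigcup_n I_n$, with $I_n$ small open intervals around the points of $A$) and is a Euclidean neighbourhood of $x$ minus an open set. So I would take first coordinates of exactly this type, $K=[x-\varepsilon,x+\varepsilon]\setminus O$ with $O\supseteq\bigcup\Phi$ open and $x\notin O$, and accept that the resulting trace neighbourhood is $U\setminus O$. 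The open set $O$ is countable-union-like but not countable, and showing that such sets still give $\tau_\vee$-neighbourhoods is the step I expect to be genuinely hard, possibly requiring a refinement of the construction.
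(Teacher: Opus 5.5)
Your first inclusion is correct and essentially matches the paper's. The second inclusion, however, is not proved. You stop at exactly the step that carries the content, and the repair you sketch cannot be completed.

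Suppose the first coordinates are $[x-\varepsilon,x+\varepsilon]\setminus O$ with $x\notin\overline O$. Then each such set contains a Euclidean neighbourhood of $x$. So every countable $A\in\operatorname{Ext}(\{x\})$ accumulating at $x$ meets all of them, never occurs as a label, and the supremum of your family is not $m_x$. Truncating labels to their part outside $[x-\varepsilon,x+\varepsilon]$ has the same defect.

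Suppose instead $O$ may accumulate at $x$, e.g.\ $O=\bigcup_n I_n$ with intervals $I_n$ around the points of $A$. Then every Euclidean neighbourhood of $x$ contains some whole $I_n$, which is uncountable. So $U\setminus O$ contains no set $V\setminus A'$ with $V$ a Euclidean neighbourhood of $x$ and $A'$ countable, and it is \emph{not} a $\tau_\vee$-neighbourhood of $x$. The step you call ``genuinely hard'' is therefore false.

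The underlying problem is trying to read a neighbourhood off the first coordinate of a single element of $W$. Take $W=M_A$ with $A$ as above. Every $(K,\Lambda)\in W$ has $K\cap A=\varnothing$. But a Euclidean-closed $K$ containing some $V\setminus A'$ contains $\overline{V\setminus A'}\supseteq V$, and so meets $A$. Hence no element of $M_A$ has a $\tau_\vee$-neighbourhood of $x$ as first coordinate, even though the trace $\R\setminus A$ is $\tau_\vee$-open.

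The missing idea is a second use of Scott openness, at a finite-label element rather than at $m_x$, inside a proof by contradiction. Your family $D$ already gives the first stage: from $(K,\Phi)\in W$ and $(K,\Phi)\le(\{x\},\Phi)$ you get $(\{x\},\Phi)\in W$ with $\Phi$ finite. The paper gets this directly from the directed family $(\{x\},\Lambda)$, $\Lambda\in[\operatorname{Ext}(\{x\})]^{<\omega}$.

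Now freeze $\Phi$. Let $C=\bigcup\Phi$, which is countable with $x\notin C$, and let $B=\{y:m_y\notin W\}$. Suppose $B\cap(x-\delta,x+\delta)$ is uncountable for every $\delta>0$. Choose distinct $b_n\in B\cap(x-\frac1{n+1},x+\frac1{n+1})\setminus C$ and put $K_n=\{x\}\cup\{b_m:m\ge n\}$. These sets are compact, decreasing and disjoint from $C$, with $\bigcap_nK_n=\{x\}$. So $(K_n,\Phi)$ is a chain whose supremum is $(\{x\},\Phi)\in W$ by (J3), and Scott openness puts some $(K_n,\Phi)$ in $W$. Since $(K_n,\Phi)\le m_{b_n}$, this forces $m_{b_n}\in W$, contradicting $b_n\in B$.

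Thus some Euclidean neighbourhood $U$ of $x$ meets $B$ in a countable set $A$, and $x\in U\setminus A\subseteq\{y:m_y\in W\}$. Because $C$ is fixed before the test sets are chosen, the test sets can be thin convergent sequences avoiding $C$ rather than neighbourhoods. That is exactly how the accumulation problem you ran into is sidestepped.
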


\begin{proof}
Lemmas~\ref{lem:join-label-open} and
\ref{lem:join-euclidean-open} show that the relative Scott topology contains
both $\tau_{\mathrm{cc}}$ and $\tau_{\mathrm e}$, hence contains their join.
For the converse, let $O$ be Scott-open and put
\[
V_O=\{x\in\R:m_x\in O\}.
\]
If $V_O=\varnothing$, it is open.  Assume $x\in V_O$ and set
$B=\R\setminus V_O$.  The finite-label histories
\[
h_\Lambda=(\{x\},\Lambda),
\qquad \Lambda\in[\operatorname{Ext}(\{x\})]^{<\omega},
\]
form a directed family with supremum $m_x$.  Hence there is a finite
$\Lambda$ with $h_\Lambda\in O$.  
Put $C_\Lambda=\bigcup\Lambda=\bigcup\{K: K\in\Lambda\}$; this is
countable and $x\notin C_\Lambda$.

We claim that there exists an  Euclidean neighborhood $W$ of $x$, such that  $B\cap W$
is countable.  

In fact, if not, one can choose distinct points
\[
b_n\in B\cap\left(x-\frac1{n+1},x+\frac1{n+1}\right)\setminus C_\Lambda
\qquad(n<\omega).
\]
Then $b_n\to x$.  Let
\[
K_n=\{x\}\cup\{b_m: m\geq n\}.
\]
Each $K_n$ is Euclidean compact, the sequence is decreasing, and
$\bigcap_nK_n=\{x\}$.  Also $K_n\cap C_\Lambda=\varnothing$, so
$d_n=(K_n,\Lambda)\in P_{\vee}$ and
$\bigvee_nd_n=h_\Lambda$ by (J3).  Scott inaccessibility gives $d_n\in O$
for some $n$.  Since $b_n\in K_n$ and every label in $\Lambda$ avoids
$K_n$, we have $(K_n,\Lambda)\leq m_{b_n}$.  Upperness of $O$ then gives
$m_{b_n}\in O$, so $b_n\in V_O$,   contradicting $b_n\in B=\R \ V_O$.

Thus there is an Euclidean open neighbourhood $W$ of $x$ such that  $A=B\cap W$ is countable and
\[
x\in W\setminus A\subseteq =W\setminus B=W\setminus (\R\setminus V_O)\subseteq V_O.
\]
By the basis (J1), $V_O\in\tau_{\vee}$.

The proof is completed.
\end{proof}

The combination of Theorem~\ref{thm:join-bounded-complete},
Proposition~\ref{prop:join-maximal}, and
Theorem~\ref{thm:join-trace} deduces the following result.

\begin{proposition}[Bounded complete model for the join]\label{thm:join-main}
The space
\[
\bigl(\R,\tau_{\mathrm e}\vee\tau_{\mathrm{cc}}\bigr)
\]
has a bounded complete dcpo model.  More precisely, $P_{\vee}$ is a bounded
complete dcpo and $x\mapsto m_x$ is a homeomorphism from this space onto
the maximal-point Scott space of $P_{\vee}$.
\end{proposition}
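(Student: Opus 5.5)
The statement is essentially an assembly of results already established in this section, so the plan is to check that the three ingredients fit together into a homeomorphism.

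First, by Theorem~\ref{thm:join-bounded-complete}, $P_{\vee}$ is a bounded complete dcpo. For this to be meaningful one must confirm that Remark~\ref{lem:join-compact-fip} is valid. If a family of nonempty Euclidean compact sets has the finite-intersection property, then fixing one member $K_{i_0}$ reduces the question to a family of closed subsets of the compact set $K_{i_0}$ with the finite-intersection property. Hence the intersection is nonempty, and being closed in a compact set, it is compact.

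Second, by Proposition~\ref{prop:join-maximal}, the map $\phi\colon x\mapsto m_x$ is a surjection of $\R$ onto $\operatorname{Max}(P_{\vee})$. It is injective because $m_x$ and $m_y$ have first coordinates $\{x\}\ne\{y\}$ when $x\ne y$. So $\phi$ is a bijection.

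Third, Theorem~\ref{thm:join-trace} identifies the topologies. Continuity of $\phi^{-1}$, that is, $\phi$ being open, amounts to showing that each relatively Scott open set $O\cap\operatorname{Max}(P_{\vee})$ corresponds to a $\tau_\vee$-open set $V_O$. This is the converse half of that theorem. Continuity of $\phi$ follows from Lemmas~\ref{lem:join-label-open} and~\ref{lem:join-euclidean-open}. These show that the basic sets $U\setminus A$ of (J1) are carried to
\[
G_U\cap M_A\cap\operatorname{Max}(P_{\vee})
\]
for countable nonempty $A$, and to $G_U\cap\operatorname{Max}(P_{\vee})$ when $A=\varnothing$. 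Note that $M_\varnothing$ already covers the empty label, since $\varnothing\in[\R]^{\le\omega}$. Each such set is an intersection of Scott open sets restricted to the maximal points, hence relatively open. Images of unions are unions, so $\phi$ is continuous.

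The only point needing real care, and where I expect the main obstacle, is the converse direction inside Theorem~\ref{thm:join-trace}. The key step is the sequence argument. From a finite label set $\Lambda$ with $h_\Lambda\in O$, one must produce, for every $x\in V_O$, a Euclidean neighbourhood $W$ with $W\cap(\R\setminus V_O)$ countable.

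I would re-verify three things there. First, $h_\Lambda$ is a genuine element of $P_\vee$, and the finite-label elements form a directed set with supremum $m_x$ by (J3). Second, $C_\Lambda=\bigcup\Lambda$ is countable, so that uncountably many points of $B$ near $x$ remain available after removing $C_\Lambda$. Third, $(K_n,\Lambda)\le m_{b_n}$ holds because $b_n\notin C_\Lambda$.

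With these checks, $V_O\supseteq W\setminus A$ for a countable $A$, which is a basic open set of (J1) containing $x$. Hence $V_O\in\tau_\vee$. This gives a homeomorphism between $(\R,\tau_{\mathrm e}\vee\tau_{\mathrm{cc}})$ and $\operatorname{Max}(P_\vee)$ with the relative Scott topology, which is the claim.
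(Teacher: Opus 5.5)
Your proposal is correct and takes the same approach as the paper, which proves the statement by combining Theorem~\ref{thm:join-bounded-complete}, Proposition~\ref{prop:join-maximal} and Theorem~\ref{thm:join-trace}; your extra checks (the finite-intersection remark, injectivity of $\phi$, and the three points in the sequence argument) are all sound. The one slip is that your two directions are mislabelled: the image computation via $G_U\cap M_A$ shows that $\phi$ is open (i.e.\ $\phi^{-1}$ is continuous), while showing that every $V_O=\phi^{-1}(O\cap\operatorname{Max}(P_{\vee}))$ is $\tau_{\vee}$-open is continuity of $\phi$ itself. Since both halves are established, the homeomorphism follows regardless.
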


Note that the compact subsets of $\bigl(\R,\tau_{\mathrm e}\vee\tau_{\mathrm{cc}}\bigr)$ are finite sets and the space is Hausdorff, hence it is not a k-space. Thus we have the following.

\begin{theorem}
There is a Hausdorff space that has a bounded complete dcpo model and the space is not a k-space. 
\end{theorem}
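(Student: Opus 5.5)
The plan is to take $X=(\R,\tau_{\mathrm e}\vee\tau_{\mathrm{cc}})$ and check three facts: that it has a bounded complete dcpo model, that it is Hausdorff, and that it is not a k-space. The first fact is exactly Proposition~\ref{thm:join-main}, which rests on Theorem~\ref{thm:join-bounded-complete}, Proposition~\ref{prop:join-maximal} and Theorem~\ref{thm:join-trace}. Hausdorffness follows because $\tau_\vee\supseteq\tau_{\mathrm e}$ and $\tau_{\mathrm e}$ is already Hausdorff; refining a Hausdorff topology keeps it Hausdorff.

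The main step is to identify the compact subsets of $X$. I will show every compact $K\subseteq X$ is finite. Suppose instead that $K$ is infinite, and choose distinct points $x_n\in K$ for $n<\omega$. Put $A=\{x_n:n<\omega\}$, which is countable. Every subset of $A$ is closed in $\tau_{\mathrm{cc}}$, hence closed in $\tau_\vee$. So $A$ is a closed subset of $K$ and therefore compact. But $A$ is also discrete in the subspace topology, since for each $n$ the set $\R\setminus(A\setminus\{x_n\})$ is cocountable open and meets $A$ only in $x_n$. An infinite discrete space is not compact, which is a contradiction. Hence the compact sets of $X$ are exactly the finite sets.

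Next I will show $X$ is not a k-space. In a space whose compact sets are all finite, every subset $C$ meets each compact set in a finite set. In a $T_1$ space a finite set is closed in any compact subspace. So if $X$ were a k-space, every subset of $X$ would be closed, and $X$ would be discrete. However, $\tau_\vee$ is not discrete. Any basic open set $U\setminus A$ from (J1) containing a point $x$ has $U$ a nonempty Euclidean open set, which is uncountable, so $U\setminus A$ is uncountable. In particular no singleton is open. Therefore $X$ is not a k-space.

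No step here is a real obstacle once Proposition~\ref{thm:join-main} is available. The only point needing care is making sure the k-space definition used (closedness tested against all compact subsets, or equivalently $X$ being a quotient of a locally compact Hausdorff space) reduces to discreteness. For Hausdorff spaces the two standard formulations agree, so the argument above applies.
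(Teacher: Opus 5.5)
Your proposal is correct and follows the paper's route: you take $(\R,\tau_{\mathrm e}\vee\tau_{\mathrm{cc}})$, cite Proposition~\ref{thm:join-main} for the bounded complete dcpo model, and infer that the space is not a k-space because its only compact sets are finite. You also supply details the paper leaves implicit, in particular that $\tau_{\vee}$ is not discrete. The paper's one-line inference (``compact sets finite and Hausdorff, hence not a k-space'') tacitly needs this fact, since a discrete space is Hausdorff, has only finite compact sets, and is a k-space.
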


\end{document}